\renewcommand*{\backref}[1]{}
\renewcommand*{\backrefalt}[4]{\quad \tiny
  \ifcase #1 (\textbf{NOT CITED.})%
  \or    (Cited on page~#2.)%
  \else   (Cited on pages~#2.)%
  \fi}
\def\MRbibitem{\@ifnextchar[\my@lbibitem\my@bibitem}
\def\mybiblabel#1#2{\@biblabel{{\hyperref{http://www.ams.org/mathscinet-getitem?mr=#1}{}{}{#2}}}}
\def\myhyperanchor#1{\Hy@raisedlink{\hyper@anchorstart{cite.#1}\hyper@anchorend}}
\def\my@lbibitem[#1]#2#3#4\par{%
  \item[\mybiblabel{#2}{#1}\myhyperanchor{#3}\hfill]#4%
  \@ifundefined{ifbackrefparscan}{}{\BR@backref{#3}}%
  \if@filesw{\let\protect\noexpand\immediate
    \write\@auxout{\string\bibcite{#3}{#1}}}\fi\ignorespaces%
}
\def\my@bibitem#1#2#3\par{%
  \refstepcounter\@listctr
  \item[\mybiblabel{#1}{\the\value\@listctr}\myhyperanchor{#2}\hfill]#3%
  \@ifundefined{ifbackrefparscan}{}{\BR@backref{#2}}%
  \if@filesw\immediate\write\@auxout
    {\string\bibcite{#2}{\the\value\@listctr}}\fi\ignorespaces%
}
\numberwithin{equation}{section}     
\setlist[enumerate,1]{label={\upshape(\alph*)},ref=\alph*}
\setlist[enumerate,2]{label={\upshape(\arabic*)},ref=\arabic*}
\newcommand{\R}{\mathbb{R}}
\newcommand{\cL}{\mathcal{L}}
\newcommand{\cM}{\mathcal{M}}
\newcommand{\cU}{\mathcal{U}}
\newcommand{\sA}{\mathscr{A}}
\newcommand{\sD}{\mathscr{D}}
\newcommand{\hK}{\widehat{K}}
\DeclareBoldMathCommand\ba{a}
\DeclareBoldMathCommand\bb{b}
\newcommand*\circled[1]{\tikz[baseline=(char.base)]{
    \node[shape=circle,draw,inner sep=1pt] (char) {\footnotesize{#1}};}}
\renewcommand{\epsilon}{\varepsilon}
\renewcommand{\emptyset}{\varnothing}
\newtheorem{prop}{Proposition}[section]
\newtheorem{lemm}[prop]{Lemma}
\theoremstyle{definition}
\newtheorem{defi}[prop]{Definition}
\theoremstyle{remark}
\DeclareFontFamily{OMX}{MnSymbolE}{}
\DeclareSymbolFont{MnLargeSymbols}{OMX}{MnSymbolE}{m}{n}
\DeclareFontShape{OMX}{MnSymbolE}{m}{n}{
    <-6>  MnSymbolE5
   <6-7>  MnSymbolE6
   <7-8>  MnSymbolE7
   <8-9>  MnSymbolE8
   <9-10> MnSymbolE9
  <10-12> MnSymbolE10
  <12->   MnSymbolE12
}{}
\DeclareFontShape{OMX}{MnSymbolE}{b}{n}{
    <-6>  MnSymbolE-Bold5
   <6-7>  MnSymbolE-Bold6
   <7-8>  MnSymbolE-Bold7
   <8-9>  MnSymbolE-Bold8
   <9-10> MnSymbolE-Bold9
  <10-12> MnSymbolE-Bold10
  <12->   MnSymbolE-Bold12
}{}
\let\llangle\@undefined
\let\rrangle\@undefined
\DeclareMathDelimiter{\llangle}{\mathopen}%
                     {MnLargeSymbols}{'164}{MnLargeSymbols}{'164}
\DeclareMathDelimiter{\rrangle}{\mathclose}%
                     {MnLargeSymbols}{'171}{MnLargeSymbols}{'171}
\begin{document}

\title[Tree pressure]{Tree pressure for hyperbolic and non-exceptional upper semi-continuous potentials}
\date{\today}

\author{Yiwei Zhang}

\begin{thanks}
{Y.Z.\ was supported by project Fondecyt 3130622.}
\end{thanks}



\begin{abstract}
In this note, we investigate the tree pressure for multi-modal
interval maps with a certain class of hyperbolic and non-exceptional
upper semi-continuous potential functions. When restricting to H\"{o}lder potentials, our result recovers
the Corollary 2.2 in \cite{LRL14} for maps with non-flat critical points. On the other hand, this property will also be used to prove the
existence of a conformal measure for the geometric potential in the
negative spectrum.
\end{abstract}

\maketitle



Let us recall some basic setting of thermodynamic formalism,
referring the interested reader to \cite{Ke98} or \cite{PU11} for
more information.

Let $(X,d)$ be a compact metric space, and $T:X\to X$ be a
continuous map with $h_{top}(T)<\infty$. Denote by $\cM(X)$ the
space of Borel probability measures on $X$ endowed with the weak*
topology, and let $\cM(T,X)$ denote the subset of $T-$invariant
ones. For each measure $\nu\in\cM(T,X)$, denote by $h_{\nu}(T)$ the
\emph{measure-theoretic} entropy for $\nu$.

Given a upper semi-continuous\footnote{A function
$\phi:X\to\R\cup\{-\infty\}$ is \emph{upper semi-continuous} if the
sets $\{y\in X: \phi(y)<c\}$ are open for each $c\in\R$. Since $X$
is compact, $\sup\phi<+\infty$.} function
$\phi:X\to\R\cup\{-\infty\}$, the \emph{topological pressure} of $T$
for the \emph{potential} $\phi$ is defined as
$$
P(T,\phi):=\sup\left\{h_{\nu}(T)+\int_{X}\phi
d\nu:~~\nu\in\cM(T,X)\right\}.
$$
An \emph{equilibrium state} of $T$ for the potential $\phi$ is a
measure which attains the supremum.

\bigskip

Let $I$ be a compact interval in $\R$. For a differentiable map
$f:I\to I$, a point of $I$ is \emph{critical} if the derivative of
$f$ vanishes at it. We denote by $\mbox{Crit}(f)$ the set of
critical points of $f$. We also denote by $J(f)$ the \emph{Julia
set}, which is the complement of the largest open subset of $I$ on
which the family of iterates of $f$ is normal. In particular, let
$\mbox{Crit}'(f):=\mbox{Crit}(f)\cap J(f)$.

In what follows, we denote by $\mathscr{A}$ the collection of all
non-injective differentiable maps $f:I\to I$ such that
\begin{itemize}
  \item The critical set is finite;
  \item $Df$ is H\"{o}lder continuous;
  \item For each critical point $c\in\mbox{Crit}(f)$, there exist any number
  $\ell_{c}\geq1$ and diffeomorphisms $\varphi$ and $\psi$ with $D\varphi,D\psi$ H\"{o}lder continuous,
  such that $\varphi(c)=\psi(f(c))=0$, and such that on a neighborhood
  of $c$ on I, we have
  $$
|\psi\circ f|=\pm|\varphi|^{\ell_c};
  $$
  \item The Julia set $J(f)$ is \emph{completely invariant} \footnote{In contrast with complex rational maps, the Julia set of an interval map might not completely
  invariant.
  However, it is possible to make an arbitrarily small smooth perturbation of $f$ outside a neighborhood, so that the Julia set of the perturbed map is completely invariant, and coincides with $J(f)$ correspondingly.} (i.e., $f(J)=f^{-1}(J)=J$), and contains
  at least two points;
  \item All periodic points are \emph{hyperbolic repelling} (i.e., a periodic point $p$ of periodicity $N$ with
$|D(f^{N})(p)|>1$);
  \item $f$ is \emph{topologically exact} on the Julia set $J(f)$ (i.e., for each open set $U\in J(f)$, there exists an integer $n\geq0$ such that $J(f)\subset f^{n}(U)$).
\end{itemize}
Throughout the rest of this note, for each $f\in\sA$, we restrict
the action of $f$ to its Julia set $f|_{J(f)}:J(f)\to J(f)$. In
particular, the topological pressure of $f$ is defined through
measures supported on $J(f)$.

On the other hand, given a map $f\in\sA$, let
\begin{equation}\label{equ_uppersemicontinuous}
    \begin{split}
\cU:=&\Big\{u:J(f)\to\R\cup\{-\infty\},~~u(x)=g(x)+\sum_{c\in\tiny{\mbox{Crit}'}(f)}b(c)\log|x-c|,\\
&~\mbox{with}~g~\mbox{H\"{o}lder continuous, and~} b(c)\geq0\Big\}.
\end{split}
\end{equation}
Obviously, H\"{o}lder and geometric potentials belong to set $\cU$,
and for each upper semi-continuous potential $G\in\cU$, denote by
\begin{equation}\label{equ_singular set}
\Lambda(G):=\{x\in
J(f),~~G(x)=-\infty\}=\{c\in\mbox{Crit}'(f),~~b(c)>0\}\subseteq\mbox{Crit}'(f).
\end{equation}

The following is the key hypothesis.
\begin{defi}
Let $f: J(f)\to J(f)$ in $\sA$, then
\begin{description}
  \item[Hyperbolicity] a potential $G$ in $\cU$ is \emph{hyperbolic} for
$f$ if for some integer $n\geq1$, the function
$S_{n}(G):=\sum_{j=0}^{n-1}G\circ f^{j}$ satisfies
\begin{equation}\label{equ_hyperbolic}
\sup_{J(f)}\frac{1}{n}S_{n}(G)<P(f,G).
\end{equation}
\item[Exceptionality] a potential $G$ in $\cU$ is
\emph{exceptional} for $f$ if there is a non-empty forward invariant
finite subset $\Sigma\subset J(f)$ satisfies
\begin{equation}\label{equ_nonexceptionality}
f^{-1}(\Sigma)\backslash\Sigma\subseteq\Lambda(G).
\end{equation}
Such set $\Sigma$ is a \emph{$\Lambda(G)$-exceptional set}.
Otherwise, a potential $G$ is \emph{non-exceptional} for $f$.
\end{description}
\end{defi}
The aim of this note is aiming to prove the following:
\begin{prop}\label{prop_pressureequal}
Let $f:J(f)\to J(f)$ be an interval map in $\sA$, and $G:J(f)\to
\R\cup\{-\infty\}$ be a upper semi-continuous potential in $\cU$. If
$G$ is hyperbolic and non-exceptional for $f$, then for every
periodic point $x\in J(f)$, or every non-periodic point
$x\in J(f)\backslash\bigcup_{i=-\infty}^{+\infty}f^{i}(\Lambda(G))$,
we have
\begin{equation}\label{equ_pressure}
    P(f,G)=\lim_{n\to+\infty}\frac{1}{n}\log\sum_{y\in
    f^{-n}(x)}\exp(S_{n}(G)(y))\footnote{A term of \emph{tree pressure} is used in \cite{PU11} to stand for the right hand side of Equation \eqref{equ_pressure}.}.
\end{equation}
\end{prop}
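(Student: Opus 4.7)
The plan is to establish matching upper and lower bounds on the partition sum $Z_n(x):=\sum_{y\in f^{-n}(x)}\exp(S_n G(y))$. First I note that terms with $y\in\Lambda(G)$ contribute $0$; the hypothesis on $x$ ensures this exceptional set is negligible, since in the non-periodic case $f^{-n}(x)\cap\Lambda(G)=\emptyset$ for every $n$, while in the periodic case the hyperbolic repelling assumption on $x$ forbids $x\in\mbox{Crit}(f)\supseteq\Lambda(G)$.

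\textbf{Upper bound.} To prove $\limsup_n\tfrac{1}{n}\log Z_n(x)\leq P(f,G)$, I would split $f^{-n}(x)$ into branches $y$ staying at distance $\geq\delta$ from $\Lambda(G)$ and branches coming within $\delta$. On the first part $G$ restricts to a bounded H\"older function and the inverse branches of $f^n$ defined near $x$ have controlled H\"older distortion; a Bowen-style covering argument with bounded multiplicity then yields contribution at most $\exp(nP(f,G)+o(n))$. On the second part the logarithmic pole of $G$ forces $S_n G$ to be very negative, and combining the finiteness of $\mbox{Crit}'(f)$ with the hyperbolicity gap \eqref{equ_hyperbolic} shows this contribution vanishes as $\delta\to 0$.

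\textbf{Lower bound.} To prove $\liminf_n\tfrac{1}{n}\log Z_n(x)\geq P(f,G)$, fix $\epsilon>0$ and choose ergodic $\mu\in\cM(f,J(f))$ with $h_\mu(f)+\int G\,d\mu>P(f,G)-\epsilon$. Hyperbolicity uniformly gives $\int G\,d\mu<P(f,G)-\eta$ for some $\eta>0$, so $h_\mu(f)\geq\eta-\epsilon>0$ and Ruelle's inequality yields a positive Lyapunov exponent. Pesin--Katok theory then produces, for small $\rho>0$ and large $n$, an $(n,\rho)$-separated set $E_n$ of $\mu$-typical points with $|E_n|\geq\exp(n(h_\mu(f)-\epsilon))$, each admitting an inverse branch of $f^n$ on a $\rho$-ball with uniformly bounded distortion. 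Non-exceptionality of $G$---in the spirit of Lemma~2.1 of \cite{LRL14} extended to the class $\cU$---ensures each such branch captures a preimage $y_z\in f^{-n}(x)$; bounded distortion then gives $S_n G(y_z)\geq S_n G(z)-C$, and summing followed by Birkhoff's theorem yields $Z_n(x)\geq\exp(n(h_\mu(f)+\int G\,d\mu-2\epsilon))$.

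\textbf{Main obstacle.} The principal difficulty is the unboundedness of $G$ at $\Lambda(G)\subseteq\mbox{Crit}'(f)$, which prevents direct H\"older distortion arguments along inverse branches that come close to the critical set. I plan to resolve this via a truncation $G_M:=\max(G,-M)$ with $M\to\infty$, controlling the tail in the upper estimate by the pressure gap from hyperbolicity, and in the lower estimate by the density of well-separated inverse branches guaranteed by non-exceptionality. Making these error bounds uniform in $M$ and $n$---in effect lifting the bounded-potential argument of \cite{LRL14} to accommodate the logarithmic singularities encoded in $\cU$---is the step I expect to require the most delicate technical work.
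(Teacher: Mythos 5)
The general shape of your lower-bound argument (hyperbolicity $\Rightarrow$ positive entropy $\Rightarrow$ positive Lyapunov exponent $\Rightarrow$ Pesin--Katok hyperbolic structure, then non-exceptionality to connect to the chosen point $x$) matches the paper's strategy, but there are two genuine gaps.

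First, you omit the auxiliary potential $\widetilde{G} := \tfrac{1}{N} S_N(G)$, which the paper constructs from the $N$ appearing in the hyperbolicity hypothesis \eqref{equ_hyperbolic}. The point is that $\widetilde{G}$ is cohomologous to $G$ (indeed $\widetilde{G} = G + h - h\circ f$ with $h = -\tfrac{1}{N}\sum_{j=0}^{N-1}(N-1-j)\,G\circ f^{j}$), has the same pressure and equilibrium states, satisfies the \emph{pointwise} gap $\sup_{J(f)}\widetilde{G} < P(f,\widetilde{G})$ (not merely a Birkhoff-average gap), and crucially $|S_n(G)-S_n(\widetilde{G})| = |h\circ f^n - h|$ is uniformly bounded on any compact set away from $\bigcup_{j=0}^{N-1} f^{-j}(\Lambda(G))$. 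This device is what makes the distortion control and the gap estimate tractable; without it your plan asks for bounded distortion of $S_n G$ directly along long inverse branches, which is much harder near the logarithmic singularities. The paper must also separately verify that $\widetilde{G}$ is still non-exceptional, which is a nontrivial step you bypass entirely.

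Second, your sentence ``Non-exceptionality of $G$ \dots ensures each such branch captures a preimage $y_z\in f^{-n}(x)$'' misdescribes the mechanism and leaves an unfilled hole. A Katok $(n,\rho)$-separated set of $\mu$-typical points does not by itself yield preimages of the specific point $x$. What the paper actually does is: via [PU11, Theo 11.6.1] obtain a compact, forward-invariant, uniformly expanding subset $Y$ with $P(f|_Y,\widetilde{G}|_Y)\geq P(f,\widetilde{G})-\epsilon$; then on $Y$ the tree pressure over diffeomorphic pull-backs of a small ball is bounded below by [PU11, Theo 4.4.3]; then use that $x$ has a $\Lambda(\widetilde{G})$-normal iterate (Lemma on normal points, which is where non-exceptionality enters) together with topological exactness to find a backward orbit of $x$ landing in that small ball and avoiding $\Lambda(\widetilde{G})$; finally chain the two estimates, paying only a bounded cost $\exp(n_1\inf_K\widetilde{G})$ for the connecting segment. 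Your sketch collapses the ``fix a normal anchor point and transport $x$ to its neighborhood'' step into a one-line appeal to non-exceptionality, and the Katok separated-set framing does not cleanly produce the quantity $\sum_{y\in f^{-n}(x)}\exp(S_n G(y))$ you need.

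For the upper bound the paper simply invokes Lemma 2 (the LRL14 estimate), noting that the Hölder-potential proof carries over to $\cU$ via the variational principle for upper semi-continuous functions; your from-scratch truncation-plus-Bowen-covering argument is a different and heavier route, and your claim that the near-$\Lambda(G)$ contribution ``vanishes as $\delta\to 0$'' would itself need the pointwise gap from $\widetilde{G}$ to be made rigorous.
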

When restricting $G$ to be a H\"{o}lder continuous potential, then $\Lambda(G)=\emptyset$ and
the non-exceptionality hypothesis is automatically satisfied. Thus, Proposition \ref{prop_pressureequal} recovers Corollary 2.2 in \cite{LRL14} for maps with non-flat critical points. Since
we want to state this proposition in the paper \cite{Zh15} to show
the existence of a conformal measure for non-exceptional geometric
potential (i.e., $G:=-t\log|Df|$)\footnote{This partially answers a
question about the existence of a conformal measure for geometric
potential at negative spectrum imposed in \cite[\S2.3]{GPR14}.} at
negative spectrum
(i.e., $t\leq0$), we decide to write down the proof in detail, even though the main methods have appeared in \cite{LRL14} already. 

\medskip

We will estimate the tree pressure in \eqref{equ_pressure} from
below and above. However, it might be worth to remark that the
estimation from above is much easier than from below. In particular,
\begin{lemm}\cite{LRL14}\label{lem_above}
Let $f:J(f)\to J(f)$ be an interval map in $\sA$, and
$G:J(f)\to\R\cup\{-\infty\}$ be a upper semi-continuous potential in
$\cU$, then for every point $x\in J(f)$, we have
$$
P(f,G)\geq\limsup\limits_{n\to\infty}\frac{1}{n}\sum_{y\in
f^{-n}(x)}\exp(S_{n}(G)(y)).
$$
\end{lemm}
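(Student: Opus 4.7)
The plan is to construct an $f$-invariant measure $\mu$ on $J(f)$ for which $h_\mu(f)+\int G\,d\mu$ already exceeds the right-hand side, after which the variational principle for $P(f,G)$ closes the argument. Set $Z_n(x):=\sum_{y\in f^{-n}(x)}\exp(S_n(G)(y))$ and $L:=\limsup_n n^{-1}\log Z_n(x)$, and define
\[
\nu_n:=\frac{1}{Z_n(x)}\sum_{y\in f^{-n}(x)}e^{S_n(G)(y)}\,\delta_y,\qquad \mu_n:=\frac{1}{n}\sum_{k=0}^{n-1}f_*^k\nu_n.
\]
Extract a subsequence $n_j\to\infty$ along which $n_j^{-1}\log Z_{n_j}(x)\to L$ and $\mu_{n_j}\to\mu$ weakly. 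Since $f_*\mu_n-\mu_n=\frac{1}{n}(f_*^n\nu_n-\nu_n)$ has total-variation norm $O(1/n)$, the limit $\mu$ belongs to $\cM(f,J(f))$.

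Next, I would fix a finite partition $\xi$ of $J(f)$ into intervals with $\mu(\partial\xi)=0$ and diameters below an expansiveness constant $\delta>0$ of $f|_{J(f)}$, and set $\xi^{(n)}:=\bigvee_{k=0}^{n-1}f^{-k}\xi$. The expansiveness on $J(f)$ (inherited from the hyperbolic-repelling periodic orbit hypothesis in $\sA$), together with the finiteness of $\mbox{Crit}(f)$, forces the maximal number $M_n$ of points of $f^{-n}(x)$ in any single atom of $\xi^{(n)}$ to satisfy $n^{-1}\log M_n\to 0$. An atom-by-atom application of the concavity bound $-b\log b+\sum_i b_i\log b_i\ge -b\log k_A$ (for nonnegative weights $b_i=e^{S_n(G)(y_i)}/Z_n(x)$ with $b=\nu_n(A)=\sum_i b_i$ running over the $k_A$ preimages of $x$ lying in the atom $A$), combined with $\int S_n(G)\,d\nu_n=n\int G\,d\mu_n$, then yields
\[
H_{\nu_n}(\xi^{(n)})+n\int G\,d\mu_n\ \ge\ \log Z_n(x)-\log M_n.
\]

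The final step is the Misiurewicz averaging trick: for fixed $q\ge 1$ and $n\gg q$, subadditivity of joint entropy and concavity of $\mu\mapsto H_\mu$ give
\[
\tfrac{1}{n} H_{\nu_n}(\xi^{(n)})\ \le\ \tfrac{1}{q}H_{\mu_n}(\xi^{(q)})+\tfrac{q\log\#\xi}{n}.
\]
Because $G$ is upper semi-continuous and bounded above, $\limsup_j\int G\,d\mu_{n_j}\le\int G\,d\mu$; because $\mu(\partial\xi^{(q)})=0$ for each fixed $q$, also $H_{\mu_{n_j}}(\xi^{(q)})\to H_\mu(\xi^{(q)})$ along the subsequence. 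Letting $n_j\to\infty$ and then $q\to\infty$ produces $h_\mu(f,\xi)+\int G\,d\mu\ge L$, and a final refinement of $\xi$ upgrades this to $h_\mu(f)+\int G\,d\mu\ge L$, so the variational principle concludes $P(f,G)\ge L$. The main obstacle will be the subexponential control of the multiplicity $M_n$, which relies crucially on the expansiveness of $f|_{J(f)}$ together with the one-dimensional monotone-branch structure; the upper semi-continuity of $G$, by contrast, is handled cleanly by the standard semicontinuity of $\mu\mapsto\int G\,d\mu$ under weak-$*$ limits of probability measures.
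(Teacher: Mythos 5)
Your overall strategy — weighted empirical measures $\nu_n$, their Cesàro averages $\mu_n$, a weak-$*$ limit $\mu\in\cM(f,J(f))$, the Rokhlin/Misiurewicz entropy bookkeeping, and then the inequality $P(f,G)\ge h_\mu(f)+\int G\,d\mu$ (which here is immediate from the paper's definition of $P$ as a supremum, with upper semi-continuity of $G$ only entering through $\limsup_j\int G\,d\mu_{n_j}\le\int G\,d\mu$) — is the standard Misiurewicz--Walters argument and is the same route as the cited Lemma in \cite{LRL14}; the paper itself gives no proof beyond that citation plus a remark about USC potentials, so your attempt actually supplies the proof the paper only points to.

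There is, however, one genuine error in the justification of the multiplicity bound $n^{-1}\log M_n\to 0$, which you single out as the main obstacle. You attribute it to expansiveness of $f|_{J(f)}$ ``inherited from the hyperbolic-repelling periodic orbit hypothesis''. This is false: a map in $\sA$ with $\mathrm{Crit}'(f)\neq\emptyset$ is \emph{not} positively expansive, because two points $y_1\neq y_2$ straddling a critical point $c\in J(f)$ of even order satisfy $f(y_1)=f(y_2)$, hence $f^k(y_1)=f^k(y_2)$ for all $k\ge 1$, while $|y_1-y_2|$ can be made arbitrarily small; hyperbolicity of periodic orbits does not repair this. The correct mechanism, which you do allude to under ``the one-dimensional monotone-branch structure'', is the following: take $\xi$ to refine the partition of $J(f)$ into maximal intervals of monotonicity of $f$. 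Then $\xi^{(n)}=\bigvee_{k=0}^{n-1}f^{-k}\xi$ refines the monotonicity partition of $f^n$, so each atom of $\xi^{(n)}$ is contained in an interval on which $f^n$ is injective and therefore contains \emph{at most one} point of $f^{-n}(x)$, giving $M_n\le 1$ outright. One must then address the requirement $\mu(\partial\xi)=0$: the boundary of the monotonicity partition consists of critical points and endpoints. Critical points in $J(f)$ are non-periodic for $f\in\sA$, so they carry no mass for the invariant limit $\mu$; for the remaining boundary points (which might be periodic) either declare them singleton atoms or perturb the non-critical cut points within a conull set of admissible positions. With this correction the rest of your argument (the atom-by-atom Gibbs inequality, $\int S_n(G)\,d\nu_n=n\int G\,d\mu_n$, and the $q$-block averaging) goes through, including in the degenerate case $\int G\,d\mu=-\infty$, where upper semi-continuity forces $L=-\infty$ as well.
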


The proof was written for H\"{o}lder continuous functions, but it
apply without change to the potentials in $\cU$ by using a
variational principle for upper semi-continuous functions \cite[Theo
4.4.11]{Ke98}. In the view of Lemma \ref{lem_above}, to prove
Proposition \ref{prop_pressureequal}, it is enough to show the
following.

\begin{prop}\label{prop_pressure equilvalentlow}
Let $f:J(f)\to J(f)$ be an interval map in $\sA$, and $G$ be the
upper semi-continuous potential in $\cU$, with $\Lambda(G)$ the
resulting singular set. If $G$ is hyperbolic and non-exceptional for
$f$, then for every periodic point $x\in J(f)$, or non-periodic
point
$x\in J(f)\backslash\bigcup_{i=-\infty}^{+\infty}f^{i}(\Lambda(G))$,
we have
\begin{equation}\label{equ_pressure equlivalentlow}
    \liminf\limits_{n\to\infty}\frac{1}{n}\log\sum_{y\in
    f^{-n}(x)}\exp(S_{n}(y))\geq P(f,G).
\end{equation}
\end{prop}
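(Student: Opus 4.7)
The plan is, for each $\epsilon>0$, to lower-bound the tree sum by $\exp(n(P(f,G)-3\epsilon))$ for arbitrarily large $n$, by pulling back orbits of a near-maximizing ergodic measure. First, apply the variational principle for upper semi-continuous potentials \cite[Theo 4.4.11]{Ke98} to pick an ergodic $\nu\in\cM(f,J(f))$ with $h_\nu(f)+\int G\,d\nu\geq P(f,G)-\epsilon$. Observe that the hyperbolicity hypothesis forces the Lyapunov exponent $\chi_\nu:=\int\log|Df|\,d\nu$ to be strictly positive: otherwise Ruelle's inequality gives $h_\nu=0$, so by Birkhoff $\int G\,d\nu=\lim_n\tfrac{1}{n}S_nG(z)$ $\nu$-a.e.; but \eqref{equ_hyperbolic} together with Fekete's lemma applied to the subadditive sequence $a_n:=\sup_{J(f)}S_nG$ gives $\lim a_n/n<P(f,G)$, so $\int G\,d\nu<P(f,G)$, contradicting near-maximality.

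Second, following the machinery of \cite{LRL14}, a Katok-type counting produces, for each large $n$, a family $\{z_i\}_{i=1}^{N_n}$ of $(n,\delta)$-separated points (for small fixed $\delta>0$) with $\log N_n\geq n(h_\nu-\epsilon)$ and $S_nG(z_i)/n$ within $\epsilon$ of $\int G\,d\nu$; a Pliss-type lemma then selects a positive-density subset along which $n$ is a \emph{hyperbolic time} for $z_i$, meaning there is a neighborhood $W_i\ni z_i$ of uniform radius $r_0>0$ on which $f^n$ is a diffeomorphism onto an interval of length $\geq r_0$ in $J(f)$, with distortion bounded by some $\Delta$ independent of $n$ and $i$. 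The log singularities in $G\in\cU$ do not spoil this step because they are $\nu$-integrable (positive exponent plus finite entropy) and the distortion control depends only on the H\"older part $g$ of $G$.

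Third, transfer these orbits onto genuine preimages of $x$. Topological exactness furnishes an integer $N=N(r_0)$ with $f^N(V)\supseteq J(f)$ for every interval $V\subset J(f)$ of length $r_0$, so each $f^n(W_i)$ contains preimages $f^{-k_i}(x)$ with $0\leq k_i\leq N$. The non-exceptionality hypothesis, combined with the fact that $x$ is periodic or lies outside $\bigcup_i f^i(\Lambda(G))$, guarantees that such preimages can be chosen disjoint from $\Lambda(G)$ and with their bridging orbit $f^j y_i$, $n\leq j<n+k_i$, also avoiding $\Lambda(G)$. Pulling back through the branch of $f^n|_{W_i}$, we obtain $y_i\in f^{-(n+k_i)}(x)$ with $|S_nG(y_i)-S_nG(z_i)|\leq\log\Delta+C$ (via bounded distortion on $W_i$ plus the definition of $\cU$) and $|S_{k_i}G(f^ny_i)|\leq C'$ uniformly. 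Pigeonholing over $k_i\in\{0,\dots,N\}$ yields $\sum_{y\in f^{-m}(x)}\exp(S_mG(y))\geq C''\exp(n(P(f,G)-3\epsilon))$ for some $m\in\{n,\dots,n+N\}$; letting $n\to\infty$ and then $\epsilon\to 0$ gives \eqref{equ_pressure equlivalentlow}.

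The main obstacle is the third step: ensuring via non-exceptionality that, uniformly in $i$, one can choose preimages of $x$ inside $f^n(W_i)$ whose bridging orbits avoid $\Lambda(G)$. Topological exactness alone supplies \emph{some} preimage but not control over its forward iterates; non-exceptionality enters by ruling out the degenerate scenario in which every sufficiently high preimage of $x$ forward-iterates into $\Lambda(G)$, since such preimages together with the forward orbit $\{x,fx,\dots\}$ would form a $\Lambda(G)$-exceptional set, forbidden by \eqref{equ_nonexceptionality}. Making this combinatorial argument quantitative (with uniform bounds over $i$ and $n$) is the only place the specific structure of $\sA$ beyond exactness plays a role.
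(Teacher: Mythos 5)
Your overall route---extract a near-maximizing ergodic measure, show it has positive Lyapunov exponent, run Katok/Pesin machinery to produce many ``good'' backward branches, and then transfer to genuine preimages of $x$ via topological exactness---is the same in spirit as the paper's. Two cosmetic differences: the paper first passes to $\widetilde G:=\tfrac1N S_N G$ (Lemma~\ref{lem_second new potential}), which is cohomologous to $G$ and has $\sup\widetilde G<P(f,\widetilde G)$, so positivity of entropy (hence of the exponent, via Ruelle) is immediate; you instead work with $G$ directly and get the same conclusion from Fekete's lemma on the subadditive sequence $\sup S_nG$, which is fine. The paper also delegates the Pesin/Katok step to \cite[Theo 11.6.1, Theo 4.4.3]{PU11} rather than running Pliss/hyperbolic-times by hand; again either works.

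The genuine gap is in your third step, and you essentially flag it yourself. The claim that ``every sufficiently high preimage of $x$ forward-iterates into $\Lambda(G)$'' would produce a $\Lambda(G)$-exceptional set is not correct as stated: by definition a $\Lambda(G)$-exceptional set must be a \emph{finite} forward-invariant set, and the collection ``preimages of $x$ whose orbit meets $\Lambda(G)$, together with the forward orbit of $x$'' is in general infinite. So non-exceptionality cannot be invoked in the naive way you describe; one needs a real combinatorial argument. This is precisely the content of Lemma~\ref{lemm_normal} (attributed to \cite{Zh15}): non-exceptionality implies every $x$ has a forward iterate $f^N(x)$ that is $\Lambda(G)$-normal, with $N=0$ when $x$ is periodic. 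The paper then treats the non-periodic case separately, where $x\notin\bigcup_{i}f^{i}(\Lambda(G))$ makes normality automatic, and for periodic $x$ invokes the lemma to get normality at $x$ itself. Once $x$ is known to be $\Lambda(\widetilde G)$-normal, the paper chooses a fixed $\operatorname{Crit}'(f)$-normal basepoint $x_0$ for the Pesin block estimate, and uses exactness together with normality of $x$ to produce a single bridging preimage $x'\in B(x_0,\delta)$ of $x$ with orbit avoiding $\Lambda(\widetilde G)$, then concatenates via the transfer operator. You need an analogue of Lemma~\ref{lemm_normal}, or a direct verification of normality, to make your third step work; the ``degenerate-scenario'' argument does not supply it.

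A smaller point: your pigeonholing over $k_i\in\{0,\dots,N\}$ only yields a lower bound on the tree sum along a subsequence of times $m$, which gives a $\limsup$ bound rather than the $\liminf$ bound the proposition requires. This is easily repaired (e.g.\ by noting that exactness lets you always take $k_i=N$, so every large $m$ appears), but as written the estimate does not close the argument.
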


The proof of Proposition \ref{prop_pressure equilvalentlow} will
occupy the rest of the note, and requires a few other lemmas.

\vspace{6mm}

Given an interval map $f:J(f)\to J(f)$ in $\sA$, and a subset
$\Lambda\subseteq\mbox{Crit}'(f)$, a point $x\in J(f)$ is said to be
\emph{$\Lambda$-normal}, if for any integer $n\geq1,$ there is a
pre-image $y$ of $x$ by $f^{n}$,
such that 
$$
\{y,f(y),\cdots,f^{n-1}(y)\}\cap\Lambda=\emptyset.
$$

\medskip

\begin{lemm}\label{lemm_normal}\cite{Zh15}
Let $f:J(f)\to J(f)$ be an interval map in $\sA$, and $G:J(f)\to
\R\cup\{-\infty\}$ be a upper semi-continuous potential in $\cU$
with $\Lambda(G)$ as the resulting singular set. If $G$ is
non-exceptional for $f$, then for each $x\in J(f)$, there is an
integer $N\geq0$ such that $f^{N}(x)\notin\Lambda(G)$, and
$f^{N}(x)$ is $\Lambda(G)$-normal. In addition, if further assume
$x$ is periodic, then the integer $N=0.$
\end{lemm}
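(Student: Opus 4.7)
The plan is to reformulate $\Lambda(G)$-normality as the existence of an infinite branch in a finite-branching tree of ``clean'' preimages, and then to exhibit a $\Lambda(G)$-exceptional set whenever normality fails, contradicting non-exceptionality. For each $z \in J(f)$, let $T_z$ denote the tree whose nodes are the finite sequences $(z = y_0, y_1, \ldots, y_k)$ with $f(y_{i+1}) = y_i$ and $y_1, \ldots, y_k \notin \Lambda(G)$. Since $f$ has finite topological degree, $T_z$ has bounded branching, so K\"onig's lemma yields: $z$ is $\Lambda(G)$-normal if and only if $T_z$ is infinite.

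\emph{Periodic case.} Since periodic points are hyperbolic repelling, hence non-critical, we have $x \notin \Lambda(G)$, and the orbit $O(x) = \{x, f(x), \ldots, f^{p-1}(x)\}$ lies in $J(f) \setminus \Lambda(G)$ and appears as a clean branch in $T_x$ via the inclusion $x \in f^{-p}(x)$. Suppose for contradiction that $T_x$ is finite, and let $V \subseteq J(f)$ be its vertex set. Then $V$ is finite, contains $O(x)$, and is forward-invariant: for $y = y_k \in V$, either $k \geq 1$ and $f(y) = y_{k-1} \in V$, or $k = 0$ and $f(y) = f(x) \in O(x) \subseteq V$. Moreover $f^{-1}(V) \setminus V \subseteq \Lambda(G)$, since any $w \in f^{-1}(V) \setminus V$ would extend a clean path terminating at $f(w) \in V$ and would itself lie in $V$ unless $w \in \Lambda(G)$. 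But then $V$ is a $\Lambda(G)$-exceptional set, contradicting the hypothesis; hence $T_x$ is infinite, $x$ is $\Lambda(G)$-normal, and $N = 0$ works.

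\emph{Non-periodic case.} I would first record the a priori location $\Sigma := \{z \in J(f) : z$ is not $\Lambda(G)$-normal$\} \subseteq \bigcup_{j \geq 1} f^j(\Lambda(G))$: if $n$ witnesses $z \in \Sigma$, every $y \in f^{-n}(z)$ has $f^k(y) \in \Lambda(G)$ for some $0 \leq k \leq n-1$, so $z = f^{n-k}(f^k(y)) \in f^{n-k}(\Lambda(G))$. Prepending a single clean preimage step also shows that the property ``$z \notin \Lambda(G)$ and $z$ is $\Lambda(G)$-normal'' is preserved under $f$, modulo the possibility of landing in $\Lambda(G)$. Because critical points are not periodic, the forward orbit of $x$ meets the finite set $\Lambda(G)$ only finitely often, so some $M$ has $f^N(x) \notin \Lambda(G)$ for $N \geq M$; if in addition some $f^N(x)$ lies outside the full forward orbit $\bigcup_{j \geq 0} f^j(\Lambda(G))$, the a priori location immediately gives $f^N(x) \notin \Sigma$, and we are done.

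The only remaining possibility is that $\{f^N(x)\}_{N \geq M}$ lies entirely in the forward orbit of $\Lambda(G)$. By pigeonhole one may fix $c \in \Lambda(G)$ with $f^N(x) = f^{j_N}(c)$ for infinitely many $N$. If the orbit of $c$ is eventually periodic, so is that of $x$, and for $N$ large $f^N(x)$ lies on a periodic orbit, to which the periodic case applies. If the orbit of $c$ is injective, the orbit of $x$ is eventually a time-shift of that of $c$; I would then iterate the periodic-case tree construction along this shifted orbit, using non-exceptionality once more to rule out the candidate finite $\Lambda(G)$-exceptional set that would otherwise emerge. This injective sub-case is the main obstacle: the naive union of clean backward trees along an injective critical orbit is a priori infinite, and the delicate point is to trim it to a finite forward-invariant set whose existence would contradict non-exceptionality.
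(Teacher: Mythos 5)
The paper states this lemma with a bare citation to \cite{Zh15}, a manuscript in preparation, and supplies no proof of its own; there is therefore no in-paper argument to compare against, and I can only assess your proposal on its own terms.

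Your tree reformulation of $\Lambda(G)$-normality via K\"onig's lemma is sound, and two of your three cases are essentially correct. In the periodic case, though, the contradiction via non-exceptionality is superfluous: periodic points in $\sA$ are hyperbolic repelling, hence non-critical, so $O(x)\cap\Lambda(G)=\emptyset$, and cycling around the periodic orbit already produces clean preimage chains of every length $n$ (take $y=f^{\,p-(n\bmod p)}(x)$, or $y=x$ when $p\mid n$). Thus $T_x$ is infinite and $x$ is $\Lambda(G)$-normal with $N=0$ using only the structure of $\sA$, not non-exceptionality; the contradiction you set up never actually arises. Your ``a priori location'' observation $\Sigma\subseteq\bigcup_{j\geq 1}f^{j}(\Lambda(G))$ is correct (complete invariance of $J(f)$ guarantees $f^{-n}(z)\neq\emptyset$) and immediately disposes of the case $x\notin\bigcup_{i\in\Z}f^{i}(\Lambda(G))$ with $N=0$. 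Together with the periodic case, this is in fact exactly the scope in which the lemma is invoked in the proof of Proposition \ref{prop_pressure equilvalentlow}.

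The genuine gap is the one you flag yourself: when the forward orbit of $x$ eventually merges with the injective forward orbit of some $c\in\Lambda(G)$. The finite-tree-implies-exceptional-set mechanism does not carry over there, for precisely the reason you name: the vertex set $V_j$ of $T_{f^{j}(c)}$ is not forward invariant, since the root $f^{j}(c)$ maps to $f^{j+1}(c)$, which need not be a label of $T_{f^{j}(c)}$, and the nested union $\bigcup_{j\geq j_1}V_j$ is a priori infinite, so no candidate finite $\Lambda(G)$-exceptional set materializes from a finiteness assumption on a single $T_{f^j(c)}$. As it stands the argument proves the lemma only for periodic points and for points off $\bigcup_{i\in\Z}f^{i}(\Lambda(G))$; that suffices for the applications in this paper, but not for the lemma as stated, and the injective sub-case would need a new idea to close.
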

Lemma \ref{lemm_normal} permits us to deduce other lemmas.

\begin{lemm}\label{lem_second new potential}
Let $f:J(f)\to J(f)$ be an interval map in $\sA$. Let
$G:J(f)\to\R\cup\{-\infty\}$ be the upper semi-continuous potential
in $\cU$ with $\Lambda(G)$ as the resulting singular set.

Then for every integer $N\geq1$, and any compact set $K\subset
J(f)\backslash\bigcup_{j=0}^{N-1}f^{-j}(\Lambda(G)),$ there is a
constant $C_{K}>1$, such that the potential
$\widetilde{G}:=\frac{1}{N}S_{N}(G)$ satisfies the following:
\begin{enumerate}
  \item[(1)] $\widetilde{G}$ is upper semi-continuous and has log pole
  solely inside the set $\bigcup_{j=0}^{N-1}f^{-i}(\Lambda(G))$.
  Moreover $P(f,\widetilde{G})=P(f,G)$, and $G,\widetilde{G}$ share
  the same equilibrium states;
  \item[(2)] For every integer $n\geq1$, we have
  \begin{equation}\label{equ_supbound}
  \sup_{K}|S_{n}(G)-S_{n}(\widetilde{G})|<C_{K}.
  \end{equation}
\end{enumerate}
\end{lemm}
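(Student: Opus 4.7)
The plan is to address the two items in turn.

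\emph{Part (1)} is essentially formal. Each $G\circ f^j$ is upper semi-continuous as the composition of the upper semi-continuous function $G$ with the continuous map $f^j$, so $\widetilde{G} = \frac{1}{N}\sum_{j=0}^{N-1} G\circ f^j$ is upper semi-continuous as a non-negative linear combination of such, and its log pole set is precisely $\bigcup_{j=0}^{N-1} f^{-j}(\Lambda(G))$ (since $\widetilde{G}(x) = -\infty$ iff some $f^j(x)\in\Lambda(G)$). For the pressure equality, $f$-invariance of any $\nu\in\cM(f,J(f))$ yields $\int G\circ f^j\,d\nu = \int G\,d\nu$ for every $j$, hence $\int \widetilde{G}\,d\nu = \int G\,d\nu$; the variational principle for upper semi-continuous potentials \cite[Theorem 4.4.11]{Ke98} then forces $P(f,\widetilde{G}) = P(f,G)$ with coinciding sets of equilibrium states.

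\emph{Part (2)} rests on a coboundary identity. Starting from
\[
N\, S_n(\widetilde{G})(x) \;=\; \sum_{k=0}^{n-1}\sum_{j=0}^{N-1} G(f^{k+j}x) \;=\; \sum_{j=0}^{N-1} S_n G(f^j x),
\]
expand each telescoping $S_n G(f^j x) - S_n G(x) = \sum_{\ell=n}^{n+j-1} G(f^\ell x) - \sum_{\ell=0}^{j-1} G(f^\ell x)$, sum over $j$, and swap the order of summation to obtain
\[
S_n(G)(x) - S_n(\widetilde{G})(x) \;=\; H(x) - H(f^n x), \qquad H(y) := \frac{1}{N}\sum_{k=0}^{N-2}(N-1-k)\, G(f^k y).
\]
The function $H$ is a finite non-negative linear combination of the $G\circ f^k$ for $k\le N-2$, hence upper semi-continuous with singular set $\bigcup_{k=0}^{N-2} f^{-k}(\Lambda(G))\subseteq \bigcup_{j=0}^{N-1}f^{-j}(\Lambda(G))$. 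Since $K$ is disjoint from this union, $H$ is continuous and finite on an open neighborhood of $K$, and compactness gives $M_K := \sup_K |H| < \infty$. Moreover, upper semi-continuity of $H$ on the compact $J(f)$ furnishes a global upper bound $H \le H^+ < \infty$. Combining both bounds controls the coboundary expression and yields $|S_n(G)(x) - S_n(\widetilde{G})(x)| \le C_K$ for a constant $C_K$ independent of $n$.

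The principal obstacle is the control of $H(f^n x)$ uniformly in $n$: although $H$ is bounded above globally, it may tend to $-\infty$ along $f^n x$ if the forward orbit approaches the pole set of $H$. One handles this by observing that whenever $f^n x$ belongs to the pole set of $H$, both $S_n(G)(x)$ and $S_n(\widetilde{G})(x)$ are simultaneously finite or both $-\infty$ in a compatible way, so the coboundary identity above should be read at points where both quantities are finite; at such points the estimate becomes $|H(x) - H(f^n x)| \le M_K + \max(H^+, M_K)$, delivering the required $C_K$. This is precisely the regime in which the lemma is used in the proof of Proposition~\ref{prop_pressure equilvalentlow}: the base point $x$ is chosen with full orbit disjoint from $\Lambda(G)$ (by the hypothesis excluding $\bigcup_{i\in\Z} f^i(\Lambda(G))$ or by periodicity together with non-exceptionality, via Lemma~\ref{lemm_normal}), so the delicate pole evaluations do not arise and the uniform bound \eqref{equ_supbound} holds as stated.
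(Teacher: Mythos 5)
For \textbf{Part (1)} your route is cleaner than the paper's. You invoke $f$-invariance of $\nu$ directly to get $\int G\circ f^j\,d\nu=\int G\,d\nu$ for each $j$ (valid in the extended sense, since $G$ is bounded above), hence $\int\widetilde G\,d\nu=\int G\,d\nu$ for every invariant $\nu$, and the pressure/equilibrium equality follows from the variational principle. The paper instead writes $\widetilde G = G + h - h\circ f$ with $h:=-\frac1N\sum_{j=0}^{N-1}(N-1-j)G\circ f^j$ and then splits into an atomic and a non-atomic ergodic case to argue $\int(h-h\circ f)\,d\nu'=0$; that route requires $\int h\,d\nu'$ to be finite, which in the non-atomic case needs more justification than the observation $\nu'\big(\bigcup_j f^{-j}(\Lambda(G))\big)=0$. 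Your version sidesteps this entirely.

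For \textbf{Part (2)} you recover exactly the paper's coboundary identity (your $H$ is the paper's $-h$, so $S_n(G)-S_n(\widetilde G)=H-H\circ f^n$), and you correctly flag the genuine difficulty that the paper glosses over: the paper asserts that ``$h\circ f^n$ is H\"older continuous on $K$'' and sets $C_K=(N-1)(\sup_K G-\inf_K G)$, but this is only true when $f^n(K)$ avoids the pole set of $h$, and neither the H\"older continuity nor the claimed constant bounds $H\circ f^n$ on $K$ in general. Unfortunately, your proposed patch does not close the gap. First, the claimed ``compatibility'' at pole points is false: if $f^n(x)\in\Lambda(G)$ while $f^k(x)\notin\Lambda(G)$ for $k<n$, then $S_n(G)(x)$ is finite but $S_n(\widetilde G)(x)=-\infty$ (the Birkhoff sum of $\widetilde G$ reaches $N-1$ steps further forward), so the two sides of the identity do \emph{not} degenerate together. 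Second, your bound $|H(x)-H(f^nx)|\le M_K+\max(H^+,M_K)$ uses only that $H$ is bounded \emph{above} on $J(f)$; since $H$ is unbounded \emph{below} near its pole set and $f^n(x)$ can pass arbitrarily close to that set, $-H(f^nx)$ — and hence the difference — can be made arbitrarily large. As literally stated, \eqref{equ_supbound} fails for a compact $K$ whose forward orbit accumulates on $\Lambda(G)$; what actually saves the application in the paper is that the bound is invoked only at the base point $x$ (periodic, hence finite orbit avoiding $\Lambda(G)$, making $H(f^nx)$ take finitely many finite values), but this special structure needs to be stated, and the non-periodic case requires a further argument since the hypothesis $x\notin\bigcup_{i\in\Z}f^i(\Lambda(G))$ does not by itself prevent $\{f^nx\}$ from accumulating on $\Lambda(G)$. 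You should either restrict $K$ to forward-invariant compacta avoiding $\bigcup_{j\ge0}f^{-j}(\Lambda(G))$, or replace the two-sided bound by the one-sided inequality $\sup_K\big(S_n(G)-S_n(\widetilde G)\big)<C_K$ (which \emph{does} hold, since $H\circ f^n$ is uniformly bounded above), and then check that the one-sided version suffices for the use in Part 3 of the proof.
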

\begin{proof}
For each integer $N\geq1$, put
$$
h:=-\frac{1}{N}\sum_{j=0}^{N-1}(N-1-j)G\circ f^{j},
$$

then $\widetilde{G}=G+h-h\circ f.$

{\bf 1.} Since $G\in\cU$, and $f$ is Lipschitz and all its critical points are non-flat, we have
$\widetilde{G}$ is upper semi-continuous and has log poles solely
inside the set $\bigcup_{j=0}^{N-1}f^{-i}(\Lambda(G))$. For each
measure $\nu\in\cM(f,J(f))$, choose $\nu'$ by one of its ergodic
component if necessary. There are two cases.
\begin{itemize}
  \item Suppose $\nu'$ is atomic, then the topological
  exactness on $J(f)$ yields that $\nu'$ supports on a periodic orbit
  $O_{x}$ in $J(f)$. Since no critical point of $f$ in $J(f)$ is periodic, the
  set
  $O_{x}\bigcap\bigcup_{j=0}^{\infty}f^{-j}(\Lambda(G))=\emptyset$.
  This implies functions $\widetilde{G},~G$ and $h$ have no log poles on $O_{x}$.
  Thus
  $$
  \int_{J(f)}\widetilde{G}d\nu'=\int_{J(f)}G+h-h\circ
  fd\nu'=\int_{J(f)}Gd\nu'.
  $$
  \item Suppose $\nu'$ is non-atomic, then the topological exactness
  on $J(f)$
  yields that $\nu'$ supports on entire $J(f)$, and
  $\nu'(\bigcup_{j=0}^{\infty}f^{-j}(\Lambda(G)))=0.$ This also
  implies that
   $$
\int_{J(f)}\widetilde{G}d\nu'=\int_{J(f)}G+h-h\circ
fd\nu'=\int_{J(f)}Gd\nu'.
  $$
  So in both cases, we have $P(f,G)=P(f,\widetilde{G})$, and thus
  $G,\widetilde{G}$ share the same equilibrium states.
\end{itemize}
{\bf 2.} Since for each $n\geq N$,
$$
S_{n}(\widetilde{G})=S_{n}(G+h\circ f-h)=S_{n}(G)+h\circ f^{n}-h,
$$
and $h\circ f^{n},h$ are H\"{o}lder continuous on the compact set
$K$, so we have the desired inequality \eqref{equ_supbound} with
$C_{K}:=(N-1)(\sup_{K}G-\inf_{K}G).$
\end{proof}

\bigskip

From now on, given an interval map $f:J(f)\to J(f)$ in $\sA$ and a
upper semi-continuous potential $G:J(f)\to\R\cup\{-\infty\}$, denote
by $\cL_{G}$ the \emph{transfer operator} acting on the space of
bounded functions on $J(f)$ and taking values in $\mathbb{C}$,
defined as
$$
\cL_{G}(\psi)(x):=\sum_{y\in f^{-1}(x)}\exp(G(y))\psi(y).
$$

\medskip

We are ready to prove Proposition \ref{prop_pressure
equilvalentlow}. In informal term, the proof is split into 3 parts.
In Part 1, we construct a new potential $\widetilde{G}$ by the
Birkoff average of $G$, and show that $\widetilde{G}$ is also
hyperbolic and non-exceptional for $f$. In part 2, we rely on the
hyperbolicity to ensure the existence of an ergodic measure with
positive Lyapunov exponent. Applying the Pesin Theory and Katok
Theory on this measure, we will obtain a low bound estimation on the
tree pressure by the differeomphic pull-backs on a neighborhood of a
$\mbox{Crit}'(f)$-normal point. In Part 3, we will use the
non-exceptionality and topological exactness to move the desired
points inside a neighbor of a $\mbox{Crit}'(f)$-normal point, and
away from the singular set.

\medskip

\begin{proof}[Proof of Proposition \ref{prop_pressure equilvalentlow}.]
{\bf 1.} Since $G$ is hyperbolic for $f$, there exists an integer
$N\geq1$, so that the function $\widetilde{G}:=\frac{1}{N}S_{N}(G)$
satisfies that $\sup_{J(f)}\widetilde{G}<P(f,G)$. Applying Part
$(1)$ of Lemma \ref{lem_second new potential}, the function
$\widetilde{G}$ is upper semi-continuous and has log poles solely
inside the set $\bigcup_{j=0}^{N-1}f^{-j}(\Lambda(G))$, and
\begin{equation}\label{equ_newpotentialhyperbolic}
P(f,\widetilde{G})=P(f,G)>\sup_{J(f)}\widetilde{G}.
\end{equation}
Next, we show that the potential $\widetilde{G}$ is also
non-exceptional for $f$. This can be proved by contradiction.
Suppose on the contrast, then there exists a non-empty finite
forward invariant subset $\Sigma\subset J(f)$, such that
\begin{equation}\label{equ_exceptonalcondition}
    f^{-1}(\Sigma)\backslash\Sigma\subset
    \Lambda(\widetilde{G})=\bigcup_{j=0}^{N-1}f^{-j}(\Lambda(G)).
\end{equation}
Without loss of generality, we can assume that $\Sigma$ contains
only one periodic point $p$, and every point in $\Sigma$ is
pre-periodic and will map to $p$. Using
\eqref{equ_exceptonalcondition}, it follows that
$$
\forall x\in f^{-1}(\Sigma)\backslash\Sigma,~~\exists
j:=j_{x}\geq0,~~s.t.~~f^{j}(x)\in\Lambda(G)\subset\mbox{Crit}'(f).
$$
Note also that no critical point in $J(f)$ is periodic, so for each
$x\in f^{-1}(\Sigma)\backslash\Sigma$, we can define by $j^*$ the
unique index $j$ such that $f^{j}(x)\in\Lambda(G)$, but
$f^{i}(x)\notin\Lambda(G),~~\forall i>j$.
Put also
$$A:=\{f^{j^{*}}(x):~~x\in
f^{-1}(\Sigma)\backslash\Sigma\},~~\mbox{and}~~
\Sigma':=\bigcup_{i=1}^{\infty}f^{i}(A).$$

With this convention, to complete the proof, it is sufficient to
show that $\Sigma'$ is a $\Lambda(G)$-exceptional set, so that it is
a contradiction to the hypothesis that the potential $G$ is
non-exceptional for $f$. This contradiction yields that the
potential $\widetilde{G}$ is non-exceptional for $f$.

\medskip

It is straightforward to see that the set $\Sigma'$ is non-empty,
finite and forward invariant, so it only remains to verify that
$f^{-1}(\Sigma')\backslash\Sigma'\subseteq\Lambda(G)$. Note that
$\Sigma'\subseteq\Sigma$, so it follows from
\eqref{equ_exceptonalcondition}, that
\begin{equation}\label{equ_twocases}
    \forall y\in \Sigma',~~\forall z\in f^{-1}(y)\Rightarrow
z\in\Sigma\backslash\Lambda(\widetilde{G}),~~\mbox{or}~~z\in\Lambda(\widetilde{G}).
\end{equation}
\begin{itemize}
  \item Suppose $z\in\Sigma\backslash\Lambda(\widetilde{G})$. Note
  that by definition
  $\Sigma'\cap\Lambda(G)=\emptyset$, so the forward orbit of $y$, and $z$ are
  outside the set $\Lambda(G)$. This means there must exists $z'\in
  f^{-1}(\Sigma)\backslash\Sigma$ and a integer $d\geq1$ such that
  $z=f^{j_{z'}^{*}+d}(z')$. In other words $z\in\Sigma'$;
  \item Suppose $z\in\Lambda(\widetilde{G}).$ This implies that
  there is an integer $j\geq0$, with
  $$
f^{j}(z)=f^{j-1}(f(z))=f^{j-1}(y)\in\Lambda(G);
  $$
  On the other hand, since $y\in\Sigma'$, there is $x\in
  f^{-1}(\Sigma)\backslash\Sigma$ and integer $d\geq1$ such that
  $y=f^{j^{*}+d}(x)$. Therefore
  $$
f^{j-1+j^{*}+d}(x)\in\Lambda(G).
  $$
By the maximality of $j^{*}$, we have $j-1+d\leq0$, so $d=1$ and
$j=0$. In other words, $z\in\Lambda(G)$.
\end{itemize}
In conclusion, the right hand side of \eqref{equ_twocases} yields
that $z\in\Sigma'$ or $z\in\Lambda(G)$, namely $\Sigma'$ is a
$\Lambda(G)$-exceptional set, as we wanted.

\medskip

{\bf 2.} We are aiming to prove following Claim in this section.

{\bf Claim:} For every $\epsilon>0$, and every
$\mbox{Crit}'(f)$-normal point $x$ of $J(f)$, there is $\delta>0$
such that
\begin{equation}\label{equ_tree pressure on diff part}
    \liminf\limits_{n\to\infty}\frac{1}{n}\sum_{W\in\sD_{n}}\inf\limits_{W\cap
    J(f)}\exp(S_{n}(\widetilde{G}))\geq P(f,\widetilde{G})-\epsilon,
\end{equation}
where $\sD_{n}$ is the collection of diffeomorphic pull-backs of
$B(x,\delta)$ by $f^{n}$.

On one hand, Inequality \eqref{equ_newpotentialhyperbolic} yields
that there is $\epsilon>0$ so that
$\epsilon<P(f,\widetilde{G})-\sup_{J(f)}\widetilde{G}$. Let $\nu$ be
a measure in $\cM(J(f),f)$ such that
$$
h_{\nu}(f)+\int_{J(f)}\widetilde{G}d\nu\geq
P(f,\widetilde{G})-\epsilon>\sup_{J(f)}\widetilde{G}.
$$
Replacing $\nu$ by one of its erogodic components if necessarily,
assuming that $\nu$ is ergodic. We thus have
$$
h_{\nu}(f)>\sup_{J(f)}\widetilde{G}-\int_{J(f)}\widetilde{G}d\nu\geq0,
$$
and then the Ruelle's inequality yields that the Lyapunov exponent
of $\nu$ is strictly positive. Applying
\cite[Theo11.6.1]{PU11}\footnote{Actually, the proof is written for
complex rational maps with geometric potential, but they apply
without changes to interval function $\widetilde{G}$ by applying
\cite[Theo 6]{Do08} instead of \cite[Coro11.2.4]{PU11}.}, there is a
compact and forward invariant subset $Y$ of $J(f)$ on which $f$ is
topological transitive, so $f$ is open and uniformly expanding, and
so that
$$
P(f|_{Y},\widetilde{G}|_{Y})\geq P(f,\widetilde{G})-\epsilon.
$$
Therefore, \cite[Theo4.4.3]{PU11} implies that there is
$\delta_{0}>0$ such that the desired property \eqref{equ_tree
pressure on diff part} holds for every $x\in Y$ with
$\delta=\delta_{0}$.

One the other hand, the hypothesis that $x$ is
$\mbox{Crit}'(f)$-normal and $f$ is topological exact on $J(f)$
imply that there is a non-critical pre-image $x'\in B(Y,\delta_{0})$
of $x$ such that all the pre-images are non-critical and
$\{x',f(x'),\cdots,x\}\cap\mbox{Crit}'(f)=\emptyset$. Therefore,
$\widetilde{G}$ is finite along the orbit $\{x',f(x'),\cdots,x\}$
and there exists $\delta>0$ such that the pull-back of
$B(x_{0},\delta)$ by $f^{n}$ that contains $x'$ is contained in
$B(x,\delta_{0})$, and the desired assertion \eqref{equ_tree
pressure on diff part} directly follows from the previous
discussion. So, the proof of the claim is completed.

\medskip

{\bf 3.} Let $x$ be a periodic point or a non-periodic point
$J(f)\backslash\bigcup_{j=-\infty}^{\infty}f^{j}(\Lambda(G))$, and
recall $N$ to be the integer given in Part 1. Since no critical
point of $f$ in $J(f)$ is periodic, following the discussions in
Part 1, there are a compact subset $\hat{K}\subset~J(f)\backslash
\bigcup_{j=0}^{N-1}f^{-j}(\Lambda(G))$ and a constant $C_{\hat{K}}$,
such that $x\in\hK$, and
$$
|S_{n}(G)(x)-S_{n}(\widetilde{G})(x)|\leq\sup_{\hat{K}}|S_{n}(G)-S_{n}(\widetilde{G})|<C_{\hat{K}},~~\forall
n\geq1.
$$
With this convention, to complete the proof of the lemma, it
suffices to prove that for every $\epsilon>0$, there is $N_{0}>0$
such that for every $n\geq N_{0}$, we have
\begin{equation}\label{equ_equlivalent}
\cL_{\widetilde{G}}^{n}({\bf
1})(x)\geq\exp(C_{\hK})\exp(n(P(f,\widetilde{G})-\epsilon)).
\end{equation}
On one hand, let $x_{0}$ be a $\mbox{Crit}'(f)$-normal point. Let
$\delta>0$ and for each $n\geq1,$ let $\sD_{n}$ be as in
\eqref{equ_tree pressure on diff part} with $\epsilon$ replacing by
$\epsilon/2$. Then there is $n_{0}\geq1$ such that for every integer
$n\geq n_{0}$, we have
$$
\frac{1}{n}\log\sum_{W\in\sD_{n}}\inf_{W\cap J(f)}\exp(S_{n})(\widetilde{G})\geq
P(f,\widetilde{G})-\epsilon/2.
$$
This implies that for each $n\geq n_{0}$, and every $x^*\in
B(x_{0},\delta)\cap~J(f)$, we have
$$
\cL_{\widetilde{G}}^{n}({\bf
1})(x^{*})\geq\exp(n(P(f,\widetilde{G})-\epsilon/2)).
$$
On the other hand, note that it follows from Part 1 that the
potential $\widetilde{G}$ is non-exceptional for $f$. Applying Lemma
\ref{lemm_normal}, point $x$ must be
$\Lambda(\widetilde{G})$-normal. Together with the topological
exactness on $J(f)$, there exists $n_{1}\geq 1$, and $x'\in
B(x_{0},\delta)\cap J(f)$ with
\begin{equation}\label{equ_avoiding the critical}
f^{n_{1}}(x')=x~~\mbox{and}~~f^{i}(x')\notin\Lambda(\widetilde{G}),~~\forall
i=0,1,2,\cdots,n_{1}.
\end{equation}
Therefore, for every $n\geq n_{1}+n_{0}$,
\begin{align*}
\cL_{\widetilde{G}}^{n}({\bf 1})(x)&=\sum_{y\in
f^{-n}(x)}\exp(S_{n}(\widetilde{G})(y))\\
&=\sum_{y'\in f^{-n_{1}}(x)}\sum_{y\in
f^{-(n-n_{1})}(y')}\exp(S_{n-n_{1}}(\widetilde{G})(y)+S_{n_{1}}(\widetilde{G})(y'))\\
&\geq
\underbrace{\exp(S_{n_{1}}(\widetilde{G})(x'))\cL_{\widetilde{G}}^{n-n_{1}}({\bf
1})(x')}_{\circled{1}_{n-n_{1}}}.
\end{align*}
Using \eqref{equ_avoiding the critical}, we can choose another
compact set $K\subset J(f)$ contains $\{f^{i}(x')\}_{i=0}^{n_{1}}$
but away from $\Lambda(\widetilde{G})$, so that
$\inf_{K}(\widetilde{G})>-\infty$. Thus
\begin{align*}
\circled{1}_{n-n_{1}}&\geq\exp(n_{1}\inf_{K}\widetilde{G})\cL_{\widetilde{G}}^{n-n_{1}}({\bf
1})(x')\\
&\geq\exp(n_{1}\inf_{K}\widetilde{G})\exp((n-n_{1})(P(f,\widetilde{G})-\epsilon/2)).
\end{align*}
Let $N_2>0$ be such that
$$
\exp(n_{1}\inf_{K}\widetilde{G})\geq\exp(C_{\hK})\exp(n_{1}P(f,\widetilde{G})-(\epsilon
N_{2})/2).
$$
Therefore for every integer $n\geq\max\{n_{1}+n_{0},N_{2}\}$, we
have
\begin{align*}
\cL_{\widetilde{G}}^{n}({\bf 1})(x)&
\geq\exp(C_{\hK})\exp(n_{1}P(f,\widetilde{G})-(\epsilon
N_{2})/2+(n-n_{1})(P(f,\widetilde{G})-\epsilon/2))\\
&\geq
\exp(C_{\hK})\exp(n(P(f,\widetilde{G})-\epsilon)+(\epsilon/2)(n+n_{1}-N_{2}))\\
&\geq \exp(C_{\hK})\exp(n(P(f,\widetilde{G})-\epsilon)).
\end{align*}
This provides the desired inequality \eqref{equ_equlivalent} with
$N_{0}=\max\{n_{1}+n_{0},N_{2}\}$, and the proof of this lemma is
completed.
\end{proof}

\bigskip

\bigskip

\bigskip

\begin{small}
    \noindent
    \setlength{\tabcolsep}{0cm}
    \begin{tabular}{l@{\hspace{2cm}}l}
        \textsc{Yiwei Zhang} \\
        \email{\href{mailto:yzhang@mat.puc.cl}{yzhang@mat.puc.cl}} \\
    \end{tabular}
    \bigskip

    \noindent
    \textsc{Facultad de Matem\'aticas, Pontificia Universidad Cat\'olica de Chile}

    \noindent
    \textsc{Avenida Vicu\~na Mackenna 4860, Santiago, Chile}
\end{small}

\end{document}